\providecommand*{\cupdot}{%
  \mathbin{%
    \mathpalette\@cupdot{}%
  }%
}
\newcommand*{\@cupdot}[2]{%
  \ooalign{%
    $\m@th#1\cup$\cr
    \sbox0{$#1\cup$}%
    \dimen@=\ht0 %
    \sbox0{$\m@th#1\cdot$}%
    \advance\dimen@ by -\ht0 %
    \dimen@=.5\dimen@
    \hidewidth\raise\dimen@\box0\hidewidth
  }%
}
\providecommand*{\bigcupdot}{%
  \mathop{%
    \vphantom{\bigcup}%
    \mathpalette\@bigcupdot{}%
  }%
}
\newcommand*{\@bigcupdot}[2]{%
  \ooalign{%
    $\m@th#1\bigcup$\cr
    \sbox0{$#1\bigcup$}%
    \dimen@=\ht0 %
    \advance\dimen@ by -\dp0 %
    \sbox0{\scalebox{2}{$\m@th#1\cdot$}}%
    \advance\dimen@ by -\ht0 %
    \dimen@=.5\dimen@
    \hidewidth\raise\dimen@\box0\hidewidth
  }%
}
\newenvironment{proof}{\noindent\textbf{Proof}\\}{\noindent$\Box$\\}
\newtheorem{lemma}{Lemma}
\newtheorem{theorem}[lemma]{Theorem}
\newtheorem{example}[lemma]{Example}
\newtheorem{dfn}[lemma]{Definition}
\newtheorem{prop}[lemma]{Proposition}
\author{Robert Barham}
\title{Notes on Representing $\aleph_{0}$-categorical Linear Orders}
\begin{document}
\maketitle

\begin{abstract}
These notes find a canonical representation of the $\aleph_0$-categorical linear orders based on Joseph Rosenstein's description.  A unique minimal representation, called the normal form, is obtained.
\end{abstract}

The $\aleph_{0}$-categorical linear orders were classified by Joseph Rosenstein in \cite{JGR2}, where he constructs them from finite linear orders using two operations.

\begin{dfn}
If $\langle L_0 , <_0 \rangle$ and $\langle L_1 , <_1 \rangle$ are linear orders then their \emph{concatenation}, denoted by $L_0 \,^{\wedge} L_1$ is the linear order $\langle L_0 \cup L_1 , < \rangle$, where
$$x< y \quad \mathrm{iff} \quad \left\lbrace
\begin{array}{r c c r}
(x,y \in L_0 & \mathrm{and} & x <_0 y) & \quad\mathrm{or} \\
(x,y \in L_1 & \mathrm{and} & x <_1 y) & \quad\mathrm{or} \\
(x \in L_0 & \mathrm{and} & y \in L_1)
\end{array} \right. $$
\end{dfn}

\begin{dfn}
$\langle \mathbb{Q}_n, <_{\mathbb{Q}_n}, C_1 \ldots C_n \rangle$ is the Fra\"{i}ss\'{e} generic $n$-coloured partial order, i.e. the countable dense linear order with $n$ colours which occur interdensely (for all $x$ and $y$ there are $z_1, \ldots z_n$ between $x$ and $y$ such that $C_i(z_i)$ holds for each $i$).
\end{dfn}

$\mathbb{Q}_n$ is the Fra\"iss\'e limit of $n$-coloured linear orders, and hence is $\aleph_0$-categorical.

\begin{dfn}
Let $\langle L_1 , <_1 \rangle, \ldots, \langle L_n , <_n \rangle$ be linear orders.  For each $q \in \mathbb{Q}_n$ we define $L(q)$ to be a copy of $\langle L_i , <_i \rangle$  	if $\mathbb{Q}_n \models C_i(q)$.  The $\mathbb{Q}_n$-\emph{shuffle} of $\langle L_1 , <_1 \rangle, \ldots, \langle L_n , <_n \rangle$, denoted by $\mathbb{Q}_n(L_1, \ldots L_n)$, is the linear order $\langle \bigcup_{q \in \mathbb{Q}_n} L(q), < \rangle $, where
$$x< y \quad \mathrm{iff} \quad \left\lbrace
\begin{array}{c c c c}
x,y \in L(q) & \mathrm{and} & x <_i y & \mathrm{or}\\
x \in L(q) \, , \, y \in L(p) & \mathrm{and} & q <_{\mathbb{Q}_n} p
\end{array} \right. $$
\end{dfn}

\begin{theorem} [Rosenstein]  $L$ is an $\aleph_{0}$-categorical linear order if and only if $L$ can be constructed from singletons by a finite number of concatenations or shuffles.\cite{JGR1}
\end{theorem}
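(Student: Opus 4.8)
The plan is to prove both implications through the Ryll--Nardzewski theorem, which identifies $\aleph_0$-categoricity of a countable structure with oligomorphicity of its automorphism group, equivalently with the existence of only finitely many complete $n$-types for each $n$. Throughout I write $G = \aut(L)$ and exploit the elementary fact that if $H \le G$ has only finitely many orbits on $L^n$ then so does $G$, since each $G$-orbit is a union of $H$-orbits.

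For the easy direction (constructible $\Rightarrow$ $\aleph_0$-categorical) I would induct on the length of the construction. A singleton is finite, hence trivially $\aleph_0$-categorical. For a concatenation $M = L_0\,^{\wedge}L_1$ with $L_0, L_1$ oligomorphic, the product $\aut(L_0) \times \aut(L_1)$ embeds into $\aut(M)$ acting coordinatewise; an $n$-tuple sorts into the two convex pieces, and its orbit is pinned down by the assignment of coordinates to pieces (at most $2^n$ choices) together with the $\aut(L_i)$-orbits of the two sub-tuples, so there are finitely many orbits. For a shuffle $M = \mathbb{Q}_n(L_1, \dots, L_n)$, recall that $\mathbb{Q}_n$ is oligomorphic and let $H \le \aut(M)$ consist of the maps induced by a colour-preserving $\sigma \in \aut(\mathbb{Q}_n)$ together with a choice, for each $q$, of an isomorphism $L(q) \to L(\sigma q)$ of the relevant $L_i$. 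An $n$-tuple occupies finitely many copies; its $H$-orbit is determined by the $\aut(\mathbb{Q}_n)$-type of the increasing sequence $p_1 < \dots < p_m$ of occupied points (finite, since it is fixed by the colour pattern), the distribution of coordinates among them, and the $\aut(L_{c(p_l)})$-orbit of each sub-tuple (finite by induction). Hence $H$, and therefore $\aut(M)$, is oligomorphic.

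For the hard direction ($\aleph_0$-categorical $\Rightarrow$ constructible) I would run a structural induction driven by a definable condensation. Set $x \sim y$ iff only finitely many elements lie between $x$ and $y$; the classes are convex, and since $(\mathbb{Z},<)$ is not $\aleph_0$-categorical, oligomorphicity forces each class to be a \emph{finite} order of bounded size (``distance $k$'' is first-order via the definable successor, so unbounded finite distances would give infinitely many $2$-types). The finitely many $2$-types then yield finitely many isomorphism types of block, each a finite concatenation of singletons. Passing to the quotient $\bar L = L/\sim$ strictly lowers the rank in the resulting tower of condensations, and on $\bar L$ I apply a dichotomy: either $G$ fixes a nonempty proper convex cut, in which case $L = L_0\,^{\wedge}L_1$ nontrivially and I recurse on the two convex pieces (each $\aleph_0$-categorical, as its realized types lie among those of $L$); or $G$ admits no invariant cut, in which case the finitely many block-types $L_1, \dots, L_n$ occur densely in every interval, giving $L \cong \mathbb{Q}_n(L_1, \dots, L_n)$.

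The main obstacle is establishing this dichotomy and, with it, termination of the recursion. The delicate points are: (i) proving that the condensation and the relevant cuts are first-order definable, so their finiteness is governed by Ryll--Nardzewski rather than by mere combinatorics; (ii) showing that in the cut-free case finiteness of $2$-types genuinely forces interdensity of every block-type, for if some type were absent from an interval, its endpoints would become distinguishable and manufacture an invariant cut, contradicting the hypothesis; and (iii) exhibiting a complexity measure---the rank of the condensation tower, refined by the number of $2$-types---that strictly decreases on passing to $L_0, L_1$ or to the ingredients $L_i$, so that only finitely many concatenations and shuffles are consumed. Once the dichotomy and a well-founded decreasing measure are in hand, the induction closes and reconstructs $L$ from singletons.
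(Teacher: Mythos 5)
The paper does not prove this theorem at all --- it quotes it from Rosenstein --- so your proposal stands or falls on its own. Your forward direction (constructible $\Rightarrow$ $\aleph_0$-categorical) is essentially sound: the embedding of $\aut(L_0)\times\aut(L_1)$ for concatenations, and the lifted colour-preserving automorphisms of $\mathbb{Q}_n$ for shuffles, do give oligomorphicity, and the reduction to a subgroup with finitely many orbits is legitimate.

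The hard direction, however, has a genuine gap: the dichotomy at its core is false. Write $A = 1\,^{\wedge}1$ and $B = 1\,^{\wedge}1\,^{\wedge}1$, and consider $L = \mathbb{Q}_1(A\,^{\wedge}\,\mathbb{Q}_1(B))$, which is constructible and hence $\aleph_0$-categorical by your own easy direction. Its finite-condensation blocks are exactly the copies of $A$ and $B$, and $L$ admits \emph{no} $\aut(L)$-invariant cut: every cut of a shuffle projects to a point or cut of the underlying homogeneous $\mathbb{Q}_n$, and a fixed-point-free colour-preserving automorphism of $\mathbb{Q}_n$ (built by back-and-forth) lifts to an automorphism of $L$ moving every cut. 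Yet the block types are \emph{not} interdense: copies of the ingredient $A\,^{\wedge}\,\mathbb{Q}_1(B)$ are convex, so between two $B$-blocks in the same copy there is no $A$-block, and $L \not\cong \mathbb{Q}_2(A,B)$. So ``no invariant cut'' does not force ``shuffle of the block types,'' and your proposed repair in point (ii) cannot work: an interval missing type $A$ does not manufacture an invariant cut, because the maximal $A$-free intervals form a dense $\aut(L)$-invariant \emph{family} of intervals, not a single distinguishable cut. The termination measure fails for a related reason: the finite condensation of an $\aleph_0$-categorical order is already dense-in-itself or a singleton (adjacent finite blocks would merge into one class), so the ``condensation tower'' stabilizes after one step and provides no well-founded rank; meanwhile the true shuffle ingredients in the cut-free case are generally \emph{infinite} suborders such as $A\,^{\wedge}\,\mathbb{Q}_1(B)$, not the finite blocks, and your scheme has no mechanism for identifying them. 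What is needed --- and what Rosenstein's cited proof supplies --- is a coarser convex equivalence (roughly, $x \equiv y$ iff the closed interval $[x,y]$ is constructible) whose classes are the genuine shuffle ingredients, together with an induction on the number of types, finite by Ryll--Nardzewski, to show the quotient is a singleton or a shuffle of the finitely many class types.
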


This leads to a formal representation of the $\aleph_{0}$-categorical linear orders

\begin{dfn}
A \emph{term} is built as follows:
\begin{description}
\item[Singleton] The singleton 1 is a term
\item[Concatenation] If $t_0, t_1$ are terms then $t_0 \, ^\wedge \, t_1$ is a term.
\item[$\mathbb{Q}_n$-shuffle] If $t_0, \ldots t_{n-1}$ are terms then $\mathbb{Q}_n (t_0, \ldots, t_{n-1})$ is a term.
\end{description}
How terms represent linear orders is obvious.  If $t$ is a term, then let $L_t$ be the linear order represented by $t$.  We say that a term $t$ is a shuffle if there are $t_0, \ldots t_{n-1}$ such that $t= \mathbb{Q}_n(t_0, \ldots t_{n-1})$.
\end{dfn}

Concatenation is obviously associative, so we will not bother using brackets.  The terms can be interpreted as linear orders in the obvious way.  Note that multiple terms can denote the same linear order.

\begin{example}
The rationals can be represented by both $\mathbb{Q}_1(1)$ and $\mathbb{Q}_2(1,1)$.
\end{example}

This leads to the question of which terms represent the same $\aleph_{0}$-categorical linear order, and if there is a canonical choice of representative.  We also discuss a slightly wider class of linear orders by allowing infinite concatenation, which will be dealt with by discussing infinite sequences of terms.  While their concatenation will not necessarily be an $\aleph_{0}$-categorical linear order we will be able to arrive at a canonical representation of these as well.

Lemma \ref{lemma:nfrep} lists the ways in which terms can represent the same linear order.  These all concern terms which encode infinite linear orders, as the representation for finite linear orders is automatically unique.

\begin{dfn}
The \textbf{complexity} of a term $T$, written as $c(T)$, is the number of concatenations and shuffles in the term added to the sum of all the lengths of the shuffles contained in the term.  The complexity of a sequence of terms is the sum of the complexities of all of the terms that appear in the sequence and the length of the sequence.
\end{dfn}

Note that this means that all infinite sequences have infinite complexity.

\begin{dfn}The \textbf{depth} of a term $T$, written as $d(T)$, is defined as follows:
\begin{itemize}
\item If $T$ represents a finite linear order then $d(T)=0$.
\item If $T$ is of the form $\mathbb{Q}_n (t_0, \ldots, t_{n-1})$ then $d(T)=max(d(t_i))+1$.
\item If $T$ is of the form $s_0 \,^\wedge \ldots \,^\wedge s_{n-1}$ then $d(T)=max(d(s_j))$.
\end{itemize}
\end{dfn}

\begin{lemma}\label{lemma:nfrep} \label{lemma:nfperm} \label{lemma:nfnest} \label{lemma:nfconc}
Let $t_0, \ldots, t_{n-1}$ be terms, let $m \leq n$ and let $f$ be a permutation of $n$.  We also let $\tau$ be either the empty set or one of the $t_i$. Then the following are isomorphic to $\mathbb{Q}_n(t_0, t_1, \ldots ,t_{n-1})$:
\begin{enumerate}
\item $\mathbb{Q}_n(t_{f(0)}, t_{f(1)}, \ldots ,t_{f(n-1)})$;
\item $\mathbb{Q}_{n+1} (t_0, \ldots, t_{n-1},t_m)$;
\item $\mathbb{Q}_{m+1}(t_0, \ldots t_{m-1}, \tau_0 \,^\wedge \mathbb{Q}_n(t_0, \ldots t_{n-1})\,^\wedge \tau_1)$ where $\tau_0,\tau_1 \in \lbrace \emptyset, t_0, \ldots , t_{n-1} \rbrace$; and
\item $\mathbb{Q}_{m}(t_0, \ldots ,t_{m-1}) ^\wedge \tau ^\wedge \mathbb{Q}_{m}(t_0, \ldots ,t_{m-1})$ where $\tau \in \lbrace \emptyset, t_0, \ldots , t_{m-1} \rbrace$.
\end{enumerate}
\end{lemma}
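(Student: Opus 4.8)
The unifying tool will be a uniqueness statement for the colouring skeleton, which I will call the \emph{skeleton lemma}: any countable dense linear order without endpoints, partitioned into $n$ classes $D_0, \ldots, D_{n-1}$ each of which is dense (equivalently, all $n$ classes are pairwise interdense), is isomorphic to $\langle \mathbb{Q}_n, <, C_0, \ldots, C_{n-1} \rangle$ by a colour-preserving order isomorphism. This is a Cantor-style back-and-forth: the interdensity hypothesis is exactly what guarantees that, given a finite colour-preserving partial isomorphism and a demand to place a point of colour $i$ inside a prescribed open interval, such a point exists on both sides, so the usual argument goes through. The payoff is the reformulation of the shuffle that I will use repeatedly: if $\langle Q, <, D_0, \ldots, D_{n-1} \rangle$ is any such skeleton and we replace each point of $D_i$ by a copy of $t_i$, then the resulting order is isomorphic to $\mathbb{Q}_n(t_0, \ldots, t_{n-1})$, since the colour-preserving isomorphism $Q \to \mathbb{Q}_n$ lifts to the shuffles (the filling over a point depends only on its colour, and corresponding summands are isomorphic).

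With this in hand, items (1), (2) and (4) become manipulations of the skeleton. For (1) I relabel the colour classes of the skeleton of $\mathbb{Q}_n(t_{f(0)}, \ldots, t_{f(n-1)})$ by $f$; the relabelled classes are still dense and pairwise interdense, and the filling over the new class $j$ is $t_j$, so the skeleton lemma applies directly. For (2) I start from the $(n+1)$-coloured skeleton of $\mathbb{Q}_{n+1}(t_0, \ldots, t_{n-1}, t_m)$ and \emph{merge} the two classes $C_m$ and $C_n$, both filled by $t_m$; the union of two dense classes is dense, leaving $n$ pairwise interdense classes with fillings $t_0, \ldots, t_{n-1}$, and the skeleton lemma applies again. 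For (4) the skeleton of $\mathbb{Q}_{m}(t_0, \ldots, t_{m-1}) \,^\wedge \tau \,^\wedge \mathbb{Q}_{m}(t_0, \ldots, t_{m-1})$ is $\mathbb{Q}_m \,^\wedge (\text{one optional point}) \,^\wedge \mathbb{Q}_m$; each colour is dense in each copy hence dense in the concatenation, the two ends supply no extreme point, and the optional middle point is adjacent to nothing, so this is again a countable dense endpointless $m$-coloured interdense order, which the skeleton lemma identifies with $\mathbb{Q}_m(t_0, \ldots, t_{m-1})$ (the asserted self-similarity; combined with (1) and (2) this is the stated form).

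Item (3) is where the self-similarity of the shuffle does real work, and I expect it to be the main obstacle. Here the plan is to \emph{reveal} a finer skeleton rather than manipulate an existing one. Writing $S = \mathbb{Q}_n(t_0, \ldots, t_{n-1})$ and $T = \mathbb{Q}_{m+1}(t_0, \ldots, t_{m-1}, \tau_0 \,^\wedge S \,^\wedge \tau_1)$, I tile $T$ by copies of the $t_i$: over each colour-$i$ point of the outer $\mathbb{Q}_{m+1}$-skeleton with $i < m$ sits one copy of $t_i$, and over each colour-$m$ point sits $\tau_0 \,^\wedge S \,^\wedge \tau_1$, itself tiled by the internal $\mathbb{Q}_n$-skeleton of $S$ together with the two flank tiles. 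Collapsing each tile to a point gives an order $Q'$ coloured by which $t_i$ fills the tile, and $T$ is exactly the shuffle of $t_0, \ldots, t_{n-1}$ over $Q'$. It then remains to check that $Q'$ meets the hypotheses of the skeleton lemma. Endpointlessness and density reduce to the tile boundaries, the only candidates for a jump or an extreme point being the flanks $\tau_0, \tau_1$; but $S$ has neither a least nor a greatest tile, so each flank is separated from the body of its chunk by infinitely many tiles, and the outer $\mathbb{Q}_{m+1}$-skeleton being dense prevents two chunks from being adjacent. Interdensity of all $n$ colours follows because the colour-$m$ points are dense in $\mathbb{Q}_{m+1}$, so a full copy of $S$, carrying every one of the $n$ colours, occurs between any two tiles. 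The skeleton lemma then yields $Q' \cong \mathbb{Q}_n$ as coloured orders and hence $T \cong S$.

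The delicate points, where I would slow down to write full detail, are the back-and-forth verification of the skeleton lemma (standard, but the interdensity hypothesis must be invoked precisely) and the boundary analysis in (3) showing that the revealed $Q'$ has no jumps and no endpoints despite the flanking terms $\tau_0, \tau_1$. Everything else is bookkeeping once the shuffle is recognised as a fixed filling placed over a colouring skeleton, an invariant preserved by all four operations.
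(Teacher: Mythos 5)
Your proposal is correct and takes essentially the same route as the paper: for each item you exhibit the order as a filling over a coloured skeleton, verify the skeleton is a countable dense linear order with interdense colours, and invoke uniqueness of $\mathbb{Q}_n$ (your skeleton lemma, which the paper leaves implicit in its Fra\"iss\'e-generic definition of $\mathbb{Q}_n$) together with the lifting of colour-preserving skeleton isomorphisms to the shuffles. In particular, your revealed finer skeleton $Q'$ in item (3) is exactly the paper's intermediate structure $M$ (the outer $\mathbb{Q}_{m+1}$ with each colour-$m$ point replaced by $x \,^\wedge \mathbb{Q}_n \,^\wedge y$), so the only difference is that you spell out the back-and-forth and lifting steps the paper takes for granted.
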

\begin{proof}
The roles of the colours in $\mathbb{Q}_n$ are interchangeable, so
$$\mathbb{Q}_n(t_0, \ldots t_{n-1}) \cong \mathbb{Q}_n(t_{f(0)}, t_{f(1)}, \ldots t_{f(n-1)})  $$

The structure $\langle \mathbb{Q}_{n+1}, <_{\mathbb{Q}_n}, C_1, \ldots, C_{m-1}, C_m \vee C_{n+1}, C_{m+1} \ldots, C_n \rangle$ is a countable dense linear order with $n$ colours which occur interdensely, and therefore
$$\langle \mathbb{Q}_{n+1}, <, C_0, \ldots, C_{m-1}, C_m \vee C_{n}, C_{m+1} \ldots, C_{n-1} \rangle \cong \langle \mathbb{Q}_{n}, <, C_0, \ldots,  \ldots, C_{n-1} \rangle$$
and $\mathbb{Q}_{n+1} (t_0, \ldots, t_{n-1},t_m) \cong \mathbb{Q}_n(t_0, t_1, \ldots t_{n-1}) $.

The structure $\mathbb{Q}_{m+1}(t_0, \ldots t_{m-1}, \tau_0 \,^\wedge \mathbb{Q}_n(t_0, \ldots t_{n-1})\,^\wedge \tau_1)$ is obtained by replacing the $C_i$ coloured elements of $\mathbb{Q}_{m+1}$ by $t_i$ if $i<m-1$ or by $\tau_0 \,^\wedge \mathbb{Q}_n(t_0, \ldots t_{n-1})\,^\wedge \tau_1$ if $i=m$.  The structure $\mathbb{Q}_{n}(t_0, \ldots t_{n-1})$ is obtained by replacing the $C_i$ coloured elements of $\mathbb{Q}_{n}$ by $t_i$.  Let $M$ be the coloured linear order obtained by replacing the $C_{m}$ coloured elements of $\mathbb{Q}_{m+1}$ by $x \,^\wedge \mathbb{Q}_n \,^\wedge y$, where $x$ and $y$ are coloured according to the values taken by $\tau , \sigma \in \lbrace \emptyset, t_0, \ldots, t_{n-1} \rbrace$.  (If $\tau=\emptyset$ then we delete $x$.)  We may also obtain $\mathbb{Q}_{n}(t_0, \ldots t_{n-1})$ by replacing the $C_i$ coloured elements of $M$ by $t_i$.  This $M$ is a dense linear order in which the colours $C_0, \ldots C_{n-1}$ occur interdensely, and so $M \cong \mathbb{Q}_n$ and hence
$$\mathbb{Q}_n(t_0, \ldots t_{n-1}) \cong \mathbb{Q}_{m+1}(t_0, \ldots t_{m-1}, \tau_0 \,^\wedge \mathbb{Q}_n(t_0, \ldots t_{n-1})\,^\wedge \tau_1)$$

Let $M$ be the structure $\mathbb{Q}_{n} \,^\wedge \lbrace x \rbrace \,^ \wedge \mathbb{Q}_n$ where $x$ is coloured by $C_i$ if and only if $\tau=t_i$.  The structure $\mathbb{Q}_{n}(t_0, \ldots ,t_{n-1}) ^\wedge \tau ^\wedge \mathbb{Q}_{n}(t_0, \ldots ,t_{n-1})$ can obtained by replacing the $C_i$ coloured elements of $M$ by $t_i$, however $M$ is a dense coloured linear order where the colours $C_i$ for $i<n$ occur interdensely, and so $M \cong \mathbb{Q}_n$.  Therefore
$$\mathbb{Q}_n(t_0, \ldots t_{n-1}) \cong \mathbb{Q}_{n}(t_0, \ldots ,t_{n-1}) ^\wedge \tau ^\wedge \mathbb{Q}_{n}(t_0, \ldots ,t_{n-1}))$$
\end{proof}

\begin{dfn}\label{dfn:nf}
We use induction over the formation of terms to define when a term is in \textbf{normal form} (n.f.).
\begin{enumerate}
\item All finite terms are in n.f..
\item A term of the form $\mathbb{Q}_{m}(t_0, \ldots ,t_{m-1})$ is in n.f. if:
\begin{enumerate}
\item all the $t_i$ are in n.f.; and
\item Numbers 2 and 3 of Lemma \ref{lemma:nfrep} do not apply.
\end{enumerate}
If the $t_i$ are permuted then the term is unaltered.
\item A term of the form $t_0 \,^\wedge \ldots \,^\wedge t_{n-1}$ is in n.f. if all the $t_i$ are in n.f. and no $t_{i-1} \,^\wedge t_i \,^\wedge t_{i+1}$ or $t_{i} \,^\wedge \emptyset \,^\wedge t_{i+1}$ satisfy Number 4 of Lemma \ref{lemma:nfconc}.
\end{enumerate}
A possibly infinite sequence of terms $(s_i)$ is said to be in \textbf{normal form} if:
\begin{enumerate}
\item each $s_i$ is in normal form;
\item no $s_{i-1} \,^\wedge s_i \,^\wedge s_{i+1}$ or $s_{i} \,^\wedge \emptyset \,^\wedge s_{i+1}$ satisfy Number 4 of Lemma \ref{lemma:nfconc};
\item if $s_j$ is finite either:
\begin{enumerate}
\item $s_{j+1}$ is infinite; or
\item $(s_i)$ is an infinite sequence and $s_j = s_k = 1$ for all $k \geq j$.
\end{enumerate}
\end{enumerate}
If $(s_i)$ is a sequence in normal form, and $L$ is a linear order represented by $(s_i)$ then we say that $(s_i)$ is the \emph{n.f. representation} of $L$.
\end{dfn}

\begin{dfn}
Let $t$ be a shuffle, and let $s$ be a term shuffled by $t$.  The we say that $\sigma \in\alpha(s,t)$ if $\sigma \subseteq L_t$ is obtained in the construction of $L_t$ by replacing an element of $\mathbb{Q}_n$ by $L_s$.
\end{dfn}

\begin{lemma}\label{lemma:nottermtoterm}
Let $S$ and $T$ be shuffles such that $L_S \cong L_T$, witnessed by $\phi$.  Suppose $S$ shuffles an $s$ such that for all $t$ shuffled by $T$ and all $\sigma \in \alpha(s,S)$
$$\phi(\sigma) \not\in \alpha(t,T)$$
Then $L_s$ is isomorphic to $A_0 \,^\wedge L_S \,^\wedge A_1$, where $A_0$ (resp. $A_1$) is either equal to a terminal (resp. initial) segment one of the terms that $T$ shuffles or empty.
\end{lemma}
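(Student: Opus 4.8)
The plan is to analyse the image $\phi(\sigma)$ of a single block $\sigma \in \alpha(s,S)$ as a convex subset of the shuffle $L_T$. Write $L_T = \bigcup_{q \in \mathbb{Q}_m} L(q)$, where each $L(q)$ is the copy of a term that $T$ shuffles inserted at the point $q$, and let $\pi : L_T \to \mathbb{Q}_m$ be the map sending each point to the index of the block containing it. Because $\sigma$ is convex in $L_S$ and $\phi$ is an order isomorphism, $\phi(\sigma)$ is convex in $L_T$, and hence $I := \pi(\phi(\sigma))$ is a convex subset of $\mathbb{Q}_m$. The whole statement will be read off from $I$ together with the two extreme blocks $L(\inf I)$ and $L(\sup I)$, so the argument splits according to whether $I$ is a single point or not.

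First I would isolate the structural fact that drives everything: any open subinterval of $\mathbb{Q}_m$ is again a countable dense linear order in which all $m$ colours occur interdensely, so it is isomorphic to $\mathbb{Q}_m$; consequently, for $a < b$ in $\mathbb{Q}_m$, the union $\bigcup_{a < q < b} L(q)$ of the blocks strictly between $a$ and $b$ is isomorphic to $L_T$, and therefore to $L_S$. Now the hypothesis is exactly what forbids the degenerate case $I = \{q\}$: if $\phi(\sigma)$ were contained in a single block $L(q)$ then, $\phi(\sigma)$ being convex, it would be (a sub-order of) a block of $T$, contrary to assumption. Hence $I$ contains two distinct points, and with $a = \inf I$, $b = \sup I$, convexity forces $L(q) \subseteq \phi(\sigma)$ for every $q$ with $a < q < b$. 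The interior part $\bigcup_{a<q<b} L(q)$ is thus a copy of $L_S$ sitting convexly inside $\phi(\sigma)$.

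It then remains to treat the two boundary blocks. By convexity, $\phi(\sigma) \cap L(a)$ is a terminal segment of $L(a)$ and $\phi(\sigma) \cap L(b)$ is an initial segment of $L(b)$, each of these being a terminal, respectively initial, segment of a term that $T$ shuffles; I set $A_0$ and $A_1$ to be these segments, taking $A_0 = \emptyset$ (respectively $A_1 = \emptyset$) when $\inf I$ is $-\infty$ or is not attained (respectively for $\sup I$). Assembling the three pieces in order gives $\phi(\sigma) \cong A_0 \,^\wedge L_S \,^\wedge A_1$, and since $L_s \cong \sigma \cong \phi(\sigma)$ this is the desired conclusion. The main obstacle is the careful handling of these boundaries --- proving that the partial blocks really are initial and terminal segments and disposing of the attainment and unboundedness cases --- together with making precise the reading of the hypothesis under which $\phi(\sigma)$ cannot be absorbed into one block; the interior isomorphism, by contrast, is immediate once the subinterval fact is in hand.
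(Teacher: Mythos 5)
Your proof is correct and takes essentially the same route as the paper's: your projection $\pi$ is the paper's map $\chi$, the same key fact (open intervals of $\mathbb{Q}_m$ are coloured-isomorphic to $\mathbb{Q}_m$, so the union of blocks strictly between the extremes is a copy of $L_T \cong L_S$) supplies the middle piece, and the boundary pieces $A_0$, $A_1$ are identified as terminal/initial segments of shuffled terms by the same convexity argument. The one place you go beyond the paper is in explicitly excluding the degenerate case where $I$ is a single point; there you read the hypothesis $\phi(\sigma)\notin\alpha(t,T)$ as ``$\phi(\sigma)$ is not contained in any block'' rather than literally ``not equal to any block,'' and this stronger reading is the right one --- it is exactly how the lemma is invoked in the later propositions, and the paper's own proof silently assumes the interior interval is nonempty, so your flagged boundary/degenerate-case care is a genuine (minor) improvement rather than a gap.
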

\begin{proof}
$L_T$ is obtained by replacing the points of $\mathbb{Q}_n$ with the appropriate linear order.  Let $\chi$ be the map from $T$ to $\mathbb{Q}_n$ that sends a point in $L_T$ to the point in $\mathbb{Q}_n$ that $t$ was obtained from when building $L_T$.

Since $\sigma$ is a bounded convex subset of $L_S$ and these properties are preserved by the maps $\phi$ and $\chi$ we know that $\chi(\phi(\sigma))$ is a bounded interval.  We denote the interior of $\chi(\phi(\tilde{s}))$ by $I$.

$\chi^{-1}(I)$ is contained in $\phi(\sigma)$ and since open intervals of $\mathbb{Q}_n$ are isomorphic to $\mathbb{Q}_n$ we have that $\chi^{-1}(I)$ is isomorphic to $L_T$.  We now define two sets
\begin{itemize}
\item $A_0 := \lbrace x \in \phi(\tilde{s}) \, : \, x < \chi^{-1}(I) \rbrace$
\item $A_1 := \lbrace x \in \phi(\tilde{s}) \, : \, x > \chi^{-1}(I) \rbrace$
\end{itemize}
$A_0$ must be contained in a copy of some term that $T$ shuffles (say $t_0$), as otherwise there would be an $x \in A_0$ such that $\chi(x) \in I$.  Since $\phi(\sigma)$ is convex, $A_0$ must be a terminal segment of some $\tau_0 \in \alpha(t_0,T)$.  Similarly $A_1$ must be an initial segment some $\tau_1 \in \alpha(t_1,T)$.  It is here that we note that $\phi(\sigma) \cong A_0 \,^\wedge \chi^{-1}(I) \,^\wedge A_1$, as required
\end{proof}

\begin{prop}
Let $S$ and $T$ be terms in normal form.  If $L_S \cong L_T$ then $S=T$.
\end{prop}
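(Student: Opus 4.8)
The plan is to argue by induction on the complexity $c(S)$ (with $L_S \cong L_T$ witnessed by an isomorphism $\phi$; by symmetry this also controls $c(T)$). If $L_S$ is finite then $L_T$ is finite and isomorphic to it, and since every finite linear order has a unique normal-form term, $S=T$. So assume $L:=L_S\cong L_T$ is infinite. The core of the argument is to read the top-level shape of a normal-form term off the isomorphism type of $L$ and then to match the constituents, the induction hypothesis finishing each case.

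First I would show that $S$ is a shuffle if and only if $T$ is a shuffle, i.e. that ``being a shuffle'' is invariant under $\cong$. This is the delicate point, because a normal-form concatenation such as $\mathbb{Q}_1(1) \,^\wedge \mathbb{Q}_1(1 \,^\wedge 1)$ also has no first or last element, so crude invariants do not separate the two shapes. The separating feature I would use is the behaviour of the tails at gaps: in a shuffle $\mathbb{Q}_p(s_0,\dots,s_{p-1})$ every gap has a lower part and an upper part that are again ``shuffles of the same family of pieces,'' whereas in a normal-form concatenation of $\ge 2$ blocks there is a distinguished gap (between consecutive blocks) across which the isomorphism type of the tail genuinely changes. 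The normal-form hypothesis is exactly what is needed here: the failure of Number 4 of Lemma \ref{lemma:nfconc} is what prevents a true concatenation from collapsing to, or being confused with, a single shuffle. Isolating the right tail/condensation invariant and proving that it detects shuffles is the main obstacle of the proof.

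Once the shapes agree, suppose both are shuffles, $S=\mathbb{Q}_p(s_0,\dots,s_{p-1})$ and $T=\mathbb{Q}_q(t_0,\dots,t_{q-1})$. For a term $s$ shuffled by $S$, either some piece $\sigma\in\alpha(s,S)$ satisfies $\phi(\sigma)\in\alpha(t,T)$ for some $t$ shuffled by $T$ (a genuine match), or the hypothesis of Lemma \ref{lemma:nottermtoterm} holds and $L_s\cong A_0\,^\wedge L_S\,^\wedge A_1$ with $A_0,A_1$ boundary segments of shuffled terms of $T$. The latter is impossible by a descent argument: $L_s$ occurs as a shuffled piece of $S$, so $d(s)<d(S)$, while $A_0\,^\wedge L_S\,^\wedge A_1$ has depth $\ge d(S)$, contradicting that depth is determined by the isomorphism type. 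Hence every shuffled term of $S$ matches one of $T$, and symmetrically. The normal-form conditions — all $s_i,t_j$ in n.f. and Numbers 2 and 3 of Lemma \ref{lemma:nfrep} not applying, so no repeated colour and no self-nesting — force this correspondence to be a bijection $s_i\mapsto t_{\pi(i)}$ with $L_{s_i}\cong L_{t_{\pi(i)}}$. Since $c(s_i)<c(S)$, the induction hypothesis gives $s_i=t_{\pi(i)}$, and because permuting arguments does not change a shuffle term in normal form, $S=T$.

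If instead both are concatenations, I would prove that the decomposition into top-level blocks (the maximal convex pieces that are singletons or shuffles, obtained after flattening by associativity) is canonical, i.e. determined by $\cong$ and preserved in order by $\phi$. The failure of Number 4 of Lemma \ref{lemma:nfconc} guarantees that two adjacent shuffle blocks are never equal and hence never merge, so $S$ and $T$ have block sequences of equal length with corresponding blocks isomorphic; applying the induction hypothesis block by block yields $S=T$. Thus the whole argument reduces to Lemma \ref{lemma:nottermtoterm} and the inductive hypothesis once the shape dichotomy and the descent measure are in place, and it is precisely the dichotomy — characterising shuffles by an isomorphism-invariant property — that I expect to require the most care.
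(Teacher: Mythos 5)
Your skeleton (match shuffled pieces, use Lemma \ref{lemma:nottermtoterm} for mismatches, close with an induction hypothesis) is the paper's, but the step you use to kill the second horn of your dichotomy is a genuine gap. You rule out $L_s \cong A_0 \,^\wedge L_S \,^\wedge A_1$ by saying $d(s) < d(S)$ while the right-hand side ``has depth $\ge d(S)$, contradicting that depth is determined by the isomorphism type.'' Depth is a syntactic attribute of terms, not of linear orders, and it is not an isomorphism invariant: $\mathbb{Q}_2(1, \mathbb{Q}_1(1))$ and $\mathbb{Q}_1(1)$ both represent the rationals (apply Number 3 of Lemma \ref{lemma:nfrep} with $m=n=1$ and $\tau_0 = \tau_1 = \emptyset$), yet have depths $2$ and $1$. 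Restricted to normal-form terms, depth invariance is a corollary of the very uniqueness statement you are proving, so invoking it is circular; moreover $A_0 \,^\wedge L_S \,^\wedge A_1$ is a linear order assembled from segments $A_0, A_1$, not a term, so its ``depth'' is undefined without further work. The paper avoids all of this by running a double induction on $d(T)$ and $d(S)$, and in the unequal-depth cases the Lemma \ref{lemma:nottermtoterm} alternative is not ``impossible'' --- it genuinely occurs. The contradiction is instead extracted from the normal-form hypothesis: one first upgrades $A_0$ and $A_1$ from mere terminal/initial segments to full copies of shuffled terms or $\emptyset$ (by a finiteness-of-intervals argument when $d(T)=1$, and by a second application of Lemma \ref{lemma:nottermtoterm} in general), then uses the depth-indexed induction hypothesis to get $s_j = t_0 \,^\wedge T \,^\wedge t_1$ with every other $s_j$ equal to some $t_i$ --- exactly the self-nesting pattern that Number 3 of Lemma \ref{lemma:nfrep} forbids in a normal form. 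Your single induction on complexity supplies no hypothesis that can play this role at the moment it is needed.

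Second, the step you yourself call the main obstacle --- an isomorphism-invariant criterion separating shuffles from concatenations, and the canonicity of the top-level block decomposition --- is announced but never proved; gesturing at ``the right tail/condensation invariant'' does not discharge it. For what it is worth, the paper does not discharge it inside this proposition either: its proof immediately writes $S = \mathbb{Q}_m(s_0,\ldots,s_{m-1})$ and $T = \mathbb{Q}_n(t_0,\ldots,t_{n-1})$ and handles concatenations in the following proposition on sequences, where the block-by-block analysis (images of blocks under $\phi$, with adjacent blocks non-isomorphic because Number 4 of Lemma \ref{lemma:nfconc} fails in a normal form) is carried out. So the honest repair is either to actually construct and verify the invariant you sketch, or to restrict the present proposition to shuffle terms and route concatenations through the sequence machinery, as the paper does.
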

\begin{proof}
Let $\phi:L_S \rightarrow L_T$ be an isomorphism.  We assume that $d(T) \leq d(S)$.  If $L_S$ and $L_T$ are finite then they are automatically represented by the same term, so we now assume that $L_S$ and $L_T$ are infinite.  Let $T = \mathbb{Q}_n (t_0, \ldots, t_{n-1})$ and $S = \mathbb{Q}_m (s_0, \ldots, s_{m-1})$.

We will prove this by induction on the depth of the shuffle of $T$, with both stages of the induction also proved by inducting on the depth of the shuffle of $S$.  Let $\phi$ be an isomorphism from $L_S$ to $L_T$.  Note that in the base case $d(T)=1$ means that every member of each $\alpha(t_i,T)$ is finite, and so can be recognised as the maximal finite convex subsets of $L_T$.

\paragraph*{$d(T)=1$ and $d(S)=1$.}
The base case for the induction on the depth of the shuffle of $T$ will be proved by another another induction, this time on the depth of $S$.  Let $d(S)=1$, i.e. each $s_j$ is finite.  For every $\tau \in \alpha(t_i,T)$ the image $\phi(\tau) \in \alpha(s_j,S)$ for some $s_j$, and for every $\sigma \in \alpha(s_j,S)$ the preimage $\phi^{-1}(\sigma)\in \alpha(t_i,T)$ for some $t_i$.  This means that for every $t_i$ there is an $s_j$ such that $t_i=s_j$ and vice versa.  Since $T$ and $S$ are in normal form both $m$ and $n$ are minimal.  This means that $n=m$ and  $(t_0, \ldots t_{n-1})$ is a permutation of $(s_0, \ldots s_{m-1})$ and $T=S$.

\paragraph*{$d(T)=1$ and $d(S)=p$ where $p>1$.}
By the induction hypothesis if $U$ is a n.f. term such that $d(U)<p$ and if $L_U$ is isomorphic to $L_T$ then $T=U$.  Since $d(S)>1$ at least one of the $s_i$ must be infinite.  Let $s_j$ be one of the infinite $s_i$.  Since $s_j$ is infinite, $\phi^{-1}(\sigma)\not\in \alpha(t_i,T)$ for all $\sigma \in \alpha(s_j,S)$.  Lemma \ref{lemma:nottermtoterm} shows that $\tilde{s_j} \cong A_0 \,^\wedge L_T \,^\wedge A_1$.

If $A_1$ is a proper terminal segment of $\tau$ then there are $x,y \in \tau$ such that $\phi^{-1}(x)$ is not contained in some $\sigma \in \alpha(s_j,S)$, but $\phi^{-1}(y)$ is.  This implies that the interval $[x,y]$ is finite while the interval $[\phi^{-1}(x),\phi^{-1}(y)]$ is infinite, contradicting the fact that $\phi$ is an isomorphism, so $\tau \in \lbrace \emptyset \rbrace \cup \alpha(t_0,T)$.  Similarly $A_2$ is either empty or contained in $\alpha(t_1,T)$.  From this we can conclude that $s_j = t_0 \,^\wedge s_j' \,^\wedge t_1$ where $L_{s_j'} \cong L_T$.

Since $d(s_j')<p$ we know that $s_j'=T$.  This means that every $s_j$ is either equal to a $t_i$ or is of the form $\sigma \,^\wedge T \,^\wedge \tau$ contradicting that both $S$ and $T$ are in normal form.

\paragraph*{$d(T)=p$ and $d(S)=p$ where $p>1$.}
Now suppose that if $U$ and $U'$ are in n.f. and $d(U)<p$ then $L_U \cong L_{U'} \Rightarrow U=U'$.  Pick an $s_j$, and let $\sigma \in \alpha(s_j,S)$.  We know that $\phi(\sigma)$ must be entirely contained in one of the $\tilde{t_i}$, as otherwise we would be able to apply Lemma \ref{lemma:nottermtoterm} and find that $\sigma$ is isomorphic to $A_0 \,^\wedge L_T \,^\wedge A_1$, which implies that $L_T$ is isomorphic to something of depth less than $p$, giving a contradiction.

So $\phi(\sigma)$ is entirely contained some $\tau \in \alpha(t_i,T)$.  Similarly $\phi^{-1}(\tau)$ must be entirely contained in an element of $\alpha(s_j,S)$.  Hence $\phi(\sigma)=\tau$ and $t_i=s_j$.  Therefore every term shuffled by $S$ is equal to a term shuffled by $T$ and vice versa.  Since both $S$ and $T$ are in normal form there can be no repeated terms in the shuffle, so $L_T=L_S$.

\paragraph*{$d(T)=p$ and $d(S)=r$ where $r>s$.}
Now suppose that $L_S$ has depth $r$ where $p<r$, and for all $U$ and $U'$ in n.f. if $d(U)<r$ then $L_U \cong L_{U'} \Rightarrow U=U'$.  There is a $\sigma \in \alpha(s_j,S)$ such that $\phi(\sigma)$ is not entirely contained in any element of any $\alpha(t_i,T)$ in which case $\sigma \cong A_0 \,^\wedge L_T \,^\wedge A_1$.  The depth of every $t_i$ is less than $r$ and so $t_i= \sigma \,^\wedge T \,^\wedge \tau$.

Suppose $A_0$ is a proper non-empty subset of an element of $\alpha(t_0,T)$.  This implies that $\phi^{-1}(t_0)$ is not contained entirely in one of the terms shuffled by $S$ and so by the Lemma \ref{lemma:nottermtoterm}, $t_0 \cong B_0 \,^\wedge L_S \,^\wedge B_1$.  As in the previous step this results in a term with depth less than $p$ being isomorphic to $L_S$, contradicting the induction hypothesis.  This means that $A_0$ is either empty or equal to $t_0$.  Similarly $A_1$ is either empty or equal to $t_1$.  Since $s_j$ has depth less than $r$ it must be equal to $t_0 \,^\wedge T \,^\wedge t_1$.  Therefore every term shuffled by $S$ is either a term shuffled by $T$, or equal to $t_0 \,^\wedge T \,^\wedge t_1$, and hence $S$ is not in normal form.
\end{proof}

\begin{prop}
If $(T_i)$ and $(S_i)$ are sequences in normal form whose linear orders encoded by their concatenations are isomorphic then $T_j=S_j$ for all $j$.
\end{prop}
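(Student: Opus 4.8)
The plan is to match the two sequences block by block from the left: I will show that any isomorphism $\phi\colon L\to L'$, where $L$ is the concatenation of $(T_i)$ and $L'$ that of $(S_i)$, must carry $L_{T_0}$ exactly onto $L_{S_0}$. We may assume each term of a normal-form sequence is finite or a shuffle, since a top-level concatenation would simply be absorbed into the sequence; thus an infinite block is always a shuffle. Once $\phi(L_{T_0})=L_{S_0}$ is established, $L_{T_0}$ and $L_{S_0}$ are isomorphic orders represented by the single normal-form terms $T_0$ and $S_0$: if they are finite they agree because finite orders have unique representations, and if they are infinite the preceding Proposition gives $T_0=S_0$. Then $\phi$ restricts to an isomorphism of the tails $L_{T_1}\,^\wedge L_{T_2}\,^\wedge\cdots$ and $L_{S_1}\,^\wedge L_{S_2}\,^\wedge\cdots$, whose defining sequences are again in normal form, so inducting on the index $j$ (peel off the first $j$ blocks, then match block $j$ as the new first block) yields $T_j=S_j$ for every $j$. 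The same peeling forces the two sequences to have the same length, for if one were exhausted while the other still carried a nonempty block then $\phi$ could not be onto.

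To identify the first block I use the isomorphism-invariant initial segment $F_L$ consisting of the points of $L$ with only finitely many predecessors, so that $\phi(F_L)=F_{L'}$. Since a shuffle has no least element, every point of an infinite block has infinitely many predecessors, and the normal-form conditions forbid two consecutive finite blocks except in the all-singleton tail. Hence $F_L$ is either (i) empty, exactly when $T_0$ is infinite; (ii) equal to $L_{T_0}$, exactly when $T_0$ is finite and $T_1$ is infinite; or (iii) all of $L$, exactly when $L\cong\omega$ and $(T_i)$ is the constant sequence of singletons. In case (ii), $\phi$ carries $L_{T_0}=F_L$ onto $F_{L'}=L_{S_0}$, both finite and isomorphic, and we peel and induct. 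Case (iii) is handled directly: an infinite block would embed a dense suborder, which $\omega$ cannot contain, and the normal-form conditions then force every block to be a singleton, so the only normal-form sequence representing $\omega$ is the all-$1$ sequence and $(T_i)=(S_i)$. (The case of finite $L$ is trivial by uniqueness of finite representations, so we assume $L$ infinite throughout.)

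\textbf{The main obstacle is case (i)}, where $T_0$ and $S_0$ are both infinite shuffles and $F_L$ gives no information whatsoever. Here $A:=\phi(L_{T_0})$ and $L_{S_0}$ are both initial segments of $L'$, hence nested; replacing $\phi$ by $\phi^{-1}$ if necessary, assume $A\subseteq L_{S_0}$, so that $L_{S_0}=A\,^\wedge C$ for a final segment $C$ of $L_{S_0}$, with $A\cong L_{T_0}$. I must rule out $A\subsetneq L_{S_0}$, i.e.\ $C\neq\emptyset$. This configuration exhibits the whole shuffle $L_{T_0}$ as a proper initial segment of the shuffle $L_{S_0}$, which is exactly the kind of situation excluded by the depth-induction in the proof of the preceding Proposition: comparing the copies $\alpha(t_i,T_0)$ against the copies $\alpha(s_j,S_0)$ across $\phi$ and applying Lemma \ref{lemma:nottermtoterm} forces either that $T_0$ and $S_0$ are the same shuffle, with $C$ realising a splitting of the forbidden form $\mathbb{Q}_n(t_0,\ldots,t_{n-1})\,^\wedge\tau\,^\wedge\mathbb{Q}_n(t_0,\ldots,t_{n-1})$ of Number 4 of Lemma \ref{lemma:nfconc}, or else a term of strictly smaller depth isomorphic to a shuffle of full depth, contradicting the induction hypothesis. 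Either outcome contradicts the assumption that the sequences are in normal form.

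The delicate point, and where the real work lies, is pinning the boundary between two \emph{adjacent but unequal} infinite shuffles, since there no local order property of the cut (no least element on either side, shuffle-like behaviour on both) distinguishes it from an internal cut of $L_{T_0}$. What saves the argument is the irreducibility built into the normal form: equal consecutive shuffles are already forbidden as the $\tau=\emptyset$ instance of Number 4 of Lemma \ref{lemma:nfconc}, so $T_0\neq T_1$, and distinct normal-form shuffles are non-isomorphic by the preceding Proposition and are therefore separated by behaviour that is dense in one block and absent from the other; tracking this separation through $\phi$ is precisely what reduces case (i) to the Lemma \ref{lemma:nottermtoterm} collapse above. I expect the careful bookkeeping of this reduction—rather than any single clever step—to be the bulk of the proof.
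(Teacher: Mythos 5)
Your overall skeleton (align the first blocks, peel, induct) matches the paper's, and your $F_L$ trichotomy disposes of the finite-first-block and $\omega$ cases correctly. But there is a genuine gap at exactly the point you flag as ``the main obstacle'': you never actually rule out $\phi(L_{T_0})\subsetneq L_{S_0}$, and both mechanisms you offer for the contradiction are mislocated. First, a shuffle occurring as a proper initial segment of a shuffle is \emph{not} ``excluded by the depth-induction in the preceding Proposition'': that proposition forbids an isomorphism between distinct normal-form terms, not an embedding as an initial segment, and such embeddings are ubiquitous --- $\mathbb{Q}_1(1)$ below an irrational-type cut is a proper initial segment of itself isomorphic to the whole. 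For the same reason, ``$T_0=S_0$ with $C$ realising a splitting of the forbidden form'' is not a contradiction either: Number 4 of Lemma \ref{lemma:nfconc} is only forbidden, in Definition \ref{dfn:nf}, between \emph{consecutive entries of the sequence} (or of a concatenation term); nothing in the normal form of the single shuffle term $S_0$ forbids $L_{S_0}$ from decomposing internally as $A \,^\wedge C$ with $A\cong L_{S_0}$ and $C\neq\emptyset$. So both outcomes you claim ``contradict the assumption that the sequences are in normal form'' are in fact consistent with it.

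The paper locates the contradiction where it actually lives: in the adjacency conditions of the \emph{other} sequence. If $\phi(L_{T_0}) \subsetneq L_{S_0}$, then $\phi^{-1}(L_{S_0})$ spills past $L_{T_0}$ into $L_{T_1},\ldots,L_{T_{i}}$ for some greatest $i\geq 1$. One then shows $L_{T_0}\cong L_{T_i}\cong L_{S_0}$ (the relevant pieces are unbounded open intervals of a shuffle), and --- this is the step entirely absent from your sketch --- that the middle part $L_{T_1}\,^\wedge \cdots \,^\wedge L_{T_{i-1}}$ must be \emph{exactly} a copy of one of the terms shuffled by $S_0$ (strict containment in such a copy would prevent $\phi$ from being a surjection), so that $T_0 \,^\wedge \cdots \,^\wedge T_i$ realises Number 4 of Lemma \ref{lemma:nfconc} (with $\tau=\emptyset$ when $i=1$), violating condition 2 of the sequence normal form for $(T_i)$. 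Your closing paragraph explicitly defers this bookkeeping, so as written the proposal omits the proof's essential idea rather than replacing it with a different one. (One point in your favour: your standing assumption that entries of a normal-form sequence are never concatenation terms is not actually in Definition \ref{dfn:nf}, and without it the statement fails --- $(1 \,^\wedge \mathbb{Q}_1(1))$ and $(1, \mathbb{Q}_1(1))$ would be distinct normal-form sequences representing the same order --- so flagging that convention is a sensible repair that the paper itself leaves implicit.)
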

\begin{proof}
Let $L_i$ and $M_i$ be the linear orders encoded by $T_i$ and $S_i$ respectively and let $L$ and $M$ correspond to the concatenations.  Let $ \phi : L \rightarrow M$ be an isomorphism.  If $T_0$ is finite then $\phi$ must map $L_0$ to $M_0$ as automorphisms preserve least elements.  If $L_0$ is infinite suppose that $i$ is the greatest number such that an element of $L_0$ is mapped to $M_i$.

If $i=1$ then $M_1$ cannot be finite as $L_0$ does not have a maximal element.  Since neither $M_0$ nor $M_1$ have maximal or minimal elements $\phi^{-1}(M_0)$ and $\phi^{-1}(M_1)$ do not have maximal or minimal elements, hence they are unbounded open intervals of $L_0$ and isomorphic to $L_0$.  However $(M_i)$ is in normal form, so $M_0$ and $M_1$ cannot be isomorphic.

If $i>1$ then we consider $\phi^{-1}(M_0) \,^\wedge \ldots \,^\wedge \phi^{-1}(M_i)$.  The argument of the previous paragraph can be adapted to show that $M_0$ and $M_i \cong L_0$.  Since $L_0$ is a shuffle, $M_0$ does not have a maximal element and $M_i$ does not have a minimal element.  We may assume that for $0 < j< i$ none of the $M_j$'s are isomorphic to $L_0$, as otherwise we can consider $M_0, \ldots, M_j$ instead of $M_0, \ldots, M_i$.

Since none of the $M_j$ are isomorphic to $L_0$ we know that $\phi^{-1}(M_1 \,^\wedge \ldots M_{i-1}) $ is contained in a copy of one of the terms that $L_0$ shuffles.  Moreover the $\phi$-preimage of $(M_1 \,^\wedge \ldots \,^\wedge M_{i-1}) $ must be a copy of one of the terms that $L_0$ shuffles as $\phi$ is an isomorphism and being strictly contained in a copy of a term would prevent $\phi$ from being a surjection.  This means that we are able to apply Lemma \ref{lemma:nfrep} to $M_0 \,^\wedge \ldots \,^\wedge M_i $, contradicting the assumption that $(M_i)$ is in normal form.

From this we deduce that $L_0 \cong M_0$, and repeating this argument shows that $L_j \cong M_j$ for all $j$.  Therefore any automorphism between $L$ and $M$ must send $L_j$ to $M_j$.  Since $(L_i)$ and $(M_j)$ are both in normal form, the singletons $L_j$ and $M_j$ are also in normal form and are represented by the same term by the preceding lemma, proving that the sequences $(T_i)$ and $(S_i)$ are equal.
\end{proof}

So now we have that if a term or a sequence of terms has a normal form representation then this representation is unique and that different normal form sequences represent different linear orders.  Finally we need to show that every sequence of terms has a normal form sequence that encodes the same linear order.  We shall first show that all finite sequences of arbitrary length have a normal form representation before considering infinite sequences.

\begin{prop}
If $t$ is a term that is not in normal form then there is a term $s$ such that $L_t=L_s$ and $s$ is in normal form.  Furthermore, $c(s)< c(t)$.
\end{prop}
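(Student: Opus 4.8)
The plan is to argue by strong induction on the complexity $c(t)$, reading the three nontrivial clauses of Lemma~\ref{lemma:nfrep} as \emph{rewrite rules} oriented from the more complex side to the simpler side. The first thing I would record is that each of rules 2, 3 and 4, applied in this reducing direction, strictly decreases $c$ while preserving $L$ (the latter being immediate, since Lemma~\ref{lemma:nfrep} exhibits the rules as isomorphisms). Rule 2 deletes a repeated argument of a shuffle, removing one summand from the shuffle-length count together with a whole subterm's worth of complexity. Rule 3 replaces $\mathbb{Q}_{m+1}(t_0, \ldots, t_{m-1}, \tau_0\,^\wedge\mathbb{Q}_n(t_0,\ldots,t_{n-1})\,^\wedge\tau_1)$ by the inner $\mathbb{Q}_n(t_0, \ldots, t_{n-1})$, discarding the entire outer shuffle. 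Rule 4 collapses a triple $\mathbb{Q}_m(t_0,\ldots,t_{m-1})\,^\wedge\tau\,^\wedge\mathbb{Q}_m(t_0,\ldots,t_{m-1})$ to the single shuffle $\mathbb{Q}_m(t_0,\ldots,t_{m-1})$, again deleting at least one whole shuffle subterm. In all three cases a full shuffle subterm is discarded, so $c$ drops strictly. Rule 1 requires no rewrite at all, since Definition~\ref{dfn:nf} already identifies shuffles that differ only by a permutation of their arguments.

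Next I would observe that applying such a reduction deep inside $t$ is legitimate: both $^\wedge$ and $\mathbb{Q}_n(\cdots)$ are defined purely from the isomorphism types of their inputs, so if $u$ is a subterm of $t$ and $L_{u'}\cong L_u$, then replacing $u$ by $u'$ yields a term $t'$ with $L_{t'}\cong L_t$. This is the routine remark that substitution is a congruence for $\cong$, which I would state and then dismiss quickly, and it also gives $c(t')=c(t)-c(u)+c(u')$.

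With these tools the induction runs as follows. Given $t$ not in normal form, among its finitely many subterms that are not in normal form I choose one, $u$, of least complexity; then every proper subterm of $u$ is in normal form. A finite term is always in normal form, so $u$ is either a shuffle or a concatenation whose immediate constituents are already in normal form, and the only way $u$ can fail Definition~\ref{dfn:nf} is that clause 2 or 3 applies (if $u$ is a shuffle) or clause 4 applies (if $u$ is a concatenation). I apply the corresponding reduction to $u$, obtaining $u'$ with $L_{u'}\cong L_u$ and $c(u')<c(u)$, and substitute to get $t'$ with $L_{t'}\cong L_t$ and $c(t')<c(t)$. If $t'$ is already in normal form I take $s=t'$; otherwise $c(t')<c(t)$ lets me invoke the induction hypothesis on $t'$ to obtain $s$ in normal form with $L_s\cong L_{t'}\cong L_t$ and $c(s)<c(t')<c(t)$.

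The only place where the argument is more than bookkeeping is the claim that a minimal non-normal subterm must match one of the patterns of rules 2--4, rather than failing for some reason not covered by Lemma~\ref{lemma:nfrep}. This is really a matter of reading Definition~\ref{dfn:nf} against the lemma clause by clause: the definition of normal form was engineered to forbid exactly the configurations that those three isomorphisms simplify, so the condition ``not in normal form, but all immediate constituents in normal form'' is by construction equivalent to ``some clause of Lemma~\ref{lemma:nfrep} applies at the top.'' I would spell this equivalence out explicitly to make the correspondence airtight, since it is the hinge on which the whole reduction step turns.
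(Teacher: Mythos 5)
Your proof is correct and follows essentially the same route as the paper's: strong induction on $c(t)$, using the isomorphisms of Lemma~\ref{lemma:nfrep} as complexity-decreasing rewrites together with the observation that substituting an isomorphic subterm preserves the represented linear order. If anything, your minimal-non-normal-subterm organization is slightly more thorough than the paper's own argument, which handles a shuffle with a non-normal argument and the rule-4 collapse inside concatenations but never explicitly performs the top-level reduction when clause 2 or 3 of Lemma~\ref{lemma:nfrep} applies to a shuffle all of whose arguments are already in normal form --- a case your clause-by-clause reading of Definition~\ref{dfn:nf} covers explicitly.
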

\begin{proof}
We prove this by induction on the complexity of terms.  All finite terms are in normal form, so the base case is immediate.  Suppose that if $c(s)<n$ then there is an $s'$ such that $L_s'=L_s$ and $s'$ is in normal form, and $c(s')<c(s)$.  Let $t$ be such that $c(t)=n$ and $t$ is not in normal form.

$t$ is not finite, as $t$ is not in normal form.

Suppose that $t= \mathbb{Q}_n(t_0, \ldots, t_{n-1})$.  If $t_i$ is not in normal form, then let $t'_i$ be such that $L_{t_i}=L_{t_i'}$.  Let $t':= \mathbb{Q}_n(t_0, \ldots, t_i', \ldots, t_{n-1})$.  Since $c(t_i')<c(t_i)$, we know that $c(t')<c(t)$, and so $c(t')$ is either in normal form, in which case we are done, or by the induction hypothesis there is a $t''$ which is in normal form, $c(t'')<c(t)$ and $L_{t''}=L_{t}$.

Suppose that $t= t_0 \,^\wedge \ldots \,^\wedge t_{n-1}$.  If there is an $i$ such that $t_i \,^\wedge t_{i+1} \,^\wedge t_{i+2}$ represents the same linear order as $t_i$ then we can replace $t_i \,^\wedge t_{i+1} \,^\wedge t_{i+2}$ by $t_i$ to obtain a term $t'$ such that $L_t=L_{t'}$ and $c(t')<c(t)$.  Similarly, if there is an $i$ such that $t_i \,^\wedge \emptyset \,^\wedge t_{i+1}$ represents the same linear order as $t_i$ then we can replace $t_i \,^\wedge \emptyset\,^\wedge t_{i+1}$ by $t_i$ to obtain a term $t'$ such that $L_t=L_{t'}$ and $c(t')<c(t)$.
\end{proof}

\begin{prop}
If $(t_i)$ is an infinite sequence of terms there is a sequence $(s_i)$ such that $(s_i)$ represents the same linear order as $(t_i)$ and is in normal form.
\end{prop}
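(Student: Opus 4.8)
The plan is to reduce to the already-established finite case and pass to a limit, the new ingredient being an infinitary version of part~4 of Lemma~\ref{lemma:nfrep}. First I would apply the preceding proposition to each $t_i$ separately, replacing it by a normal-form term with the same linear order; this secures condition~1 of the definition of a normal-form sequence and does not change the concatenation $L$. The reason the argument of the finite case cannot simply be repeated is that an infinite sequence has infinite complexity, so there is no complexity measure to induct on; worse, the reductions coming from part~4 may have to be applied infinitely often and may swallow an entire infinite tail (for example $A \,^\wedge \tau_0 \,^\wedge A \,^\wedge \tau_1 \,^\wedge A \,^\wedge \cdots$, with $A$ a fixed shuffle and each $\tau_j$ empty or one of the terms $A$ shuffles, collapses to the single term $A$).

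So the first genuine step is to record the needed infinitary strengthening of part~4: if $A=\mathbb{Q}_m(t_0,\ldots,t_{m-1})$ and $\tau_0,\tau_1,\ldots$ (finitely or infinitely many) each lie in $\lbrace \emptyset, t_0,\ldots,t_{m-1}\rbrace$, then $A \,^\wedge \tau_0 \,^\wedge A \,^\wedge \tau_1 \,^\wedge \cdots \cong A$. This is proved exactly as in Lemma~\ref{lemma:nfrep}: the coloured order $\mathbb{Q}_m \,^\wedge \lbrace x_0\rbrace \,^\wedge \mathbb{Q}_m \,^\wedge \lbrace x_1\rbrace \,^\wedge \cdots$, with $x_j$ coloured according to $\tau_j$, is a countable dense linear order with no endpoints in which each colour $C_i$ occurs interdensely, hence is isomorphic to $\mathbb{Q}_m$, and substituting the $t_i$ for the colours yields $A$ on the one hand and the displayed concatenation on the other. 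I would record alongside it the elementary fact that an $\omega$-indexed concatenation of non-empty finite orders is isomorphic to $\omega$, i.e. to the order represented by $1,1,1,\ldots$; this is what lets condition~3 be met, by converting any tail consisting entirely of finite terms into a tail of singletons, and by merging any run of consecutive finite terms that is followed by an infinite term.

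With these tools the construction is as follows. If from some index on every $t_i$ is finite, cut the sequence there: normalise the finite prefix by the finite-sequence result and append $1,1,1,\ldots$; one checks directly that the join is in normal form. Otherwise infinitely many terms are infinite, and I would build the answer by stabilisation of prefixes. For each $k$ let $F_k$ be the normal form of the finite concatenation $t_0 \,^\wedge \cdots \,^\wedge t_{k-1}$, and write $F_k=(f^k_0,\ldots,f^k_{r(k)-1})$ for its top-level blocks. The key claim is that each coordinate $f^k_j$ is eventually constant in $k$; defining $s_j$ to be this eventual value produces the desired sequence $(s_i)$.

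The main obstacle is exactly this stabilisation claim. Its proof rests on the observation that $F_{k}$ is obtained from $F_{k-1}$ by appending one normal-form term and then re-normalising, and that a normal-form finite sequence contains no instance of part~4 of Lemma~\ref{lemma:nfrep}; consequently appending a single term can trigger only a bounded leftward cascade of collapses (once the cascade reaches a place where $F_{k-1}$ was already normal it must stop), so only the last two or three blocks can change at each step and every fixed coordinate freezes. Granting the claim, I would finish by verifying that $(s_i)$ is in normal form --- conditions~2 and~3 must hold, since a local violation would already be present in some $F_k$ and contradict its normality, while the tail condition is guaranteed by the case split --- and that $(s_i)$ represents $L$, using $\bigcup_k L_{F_k}=L$ together with the infinitary form of part~4 to identify the (possibly finite) limit sequence with all of $L$. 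Uniqueness, from the two preceding propositions, then certifies that $(s_i)$ is the normal-form representation of $L$.
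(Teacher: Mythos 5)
Your overall architecture --- normalise each term, pass to a limit of prefix normal forms, and use an infinitary strengthening of part 4 of Lemma \ref{lemma:nfrep} to absorb collapsing tails --- is essentially the paper's construction, and your infinitary lemma is correct and worth stating: the paper uses it only tacitly when it asserts, in the terminating case, that the final element ``is isomorphic to the linear order represented by the appropriate tail of $(t_i)$''. The genuine gap is that your key stabilisation claim is false, and it is refuted by the very example you give as motivation. Take $A = \mathbb{Q}_1(1)$ and the input sequence $A, 1, A, 1, A, \ldots$. Then $F_{2n+1}$ is the normal form of a concatenation isomorphic to $A$ (by repeated part-4 collapses), with top-level blocks $(A)$, while $F_{2n+2}$ is isomorphic to $A \,^\wedge 1$, with top-level blocks $(A,1)$. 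Coordinate $1$ is therefore alternately the block $1$ and absent, forever; it has no eventual value, and with $A$ a shuffle of several terms and varying $\tau_i$ it can even cycle through distinct values. Your bounded-cascade observation correctly shows that appending one term disturbs only the last few blocks (the untouched prefix was already normal, so no collapse can propagate into it), but the inference from this to ``every fixed coordinate freezes'' is invalid exactly when the number of blocks does not tend to infinity: the right end of the sequence can hover over a fixed position $j$ indefinitely, with the block there created by one append and annihilated by the next via a collapse into block $j-1$. This is precisely the absorption phenomenon you flagged at the outset, so the failure sits at the crux of the proposition rather than at its periphery.

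The repair is what the paper in fact does: define $s_j$ to be the eventual value of coordinate $j$ when one exists, and truncate the output sequence at the first coordinate that has no constant tail (the paper sets such coordinates to $\emptyset$). Coordinates strictly to the left of the hovering position do freeze, by your cascade argument, so the truncated sequence is well defined and in normal form; and in the terminating case your infinitary part-4 lemma is then exactly what shows the last stabilised block is isomorphic to the entire remaining tail of $L$, so the truncated sequence still represents $L$. With that amendment, together with your $\omega$-of-finite-orders fact for all-finite tails and the uniqueness propositions to certify the result, the proof goes through. One small bookkeeping point: under Definition \ref{dfn:nf} a finite normal-form sequence cannot end in a finite block, so $F_k$ should be read, as you implicitly do, as the normal-form \emph{term} for $t_0 \,^\wedge \cdots \,^\wedge t_{k-1}$ together with its top-level concatenation blocks, with runs of finite blocks merged.
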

\begin{proof}
If we are given a sequence of terms $(t_i)$ we can construct a normal form sequence $(s_j)$ which represents the same linear order inductively.  We first let $s^0_0$ be the normal form representative of $t_0$.  Suppose that we have considered $t_i$ for $i<n$, obtaining the sequence $(s^{n-1}_j)$ for $j<m$.  Let $t_n'$ be the normal form term representing the same linear order as $t_n$.  We construct $(s_j)$ as follows:
\begin{enumerate}

\item If both $s^{n-1}_{m-1}$ and $t_n'$ are finite, but there is some $t_k$ which is a $\mathbb{Q}_n$-shuffle for $k>n$, then let $s^{n}_{m-1}=s^n_{m-1} \,^\wedge t_n'$ and $s^n_{i}=s^{n-1}_i$ for $i<n$.  We then consider $t_{n+1}$.
\item If both $s^{n-1}_{m-1}$ and $t_n'$ are finite, and there is no $t_k$ which is a $\mathbb{Q}_n$-shuffle for $k>n$, then if $j<m-1$ let $s^\omega_i=s^{n-1}_i$.  Otherwise, let $s^\omega_i=1$ for all $i\geq m$ and stop.
\item If $s_{m-2} \,^\wedge s_{m-1} \,^\wedge t_n'$ satisfies Number 4 of Lemma \ref{lemma:nfrep}, then let $s^{n}_{m-1}= \emptyset$ and $s^n_{i}=s^{n-1}_i$ for $i<m$.  We then consider $t_{n+1}$.
\item If $s_{m-1} \,^\wedge \emptyset \,^\wedge t_n'$ satisfies Number 4 of Lemma \ref{lemma:nfrep}, then let $s^n_{i}=s^{n-1}_i$ for $i<m$.  We then consider $t_{n+1}$.
\item Otherwise let $s^{n}_{m}=t_n'$ and $s^n_{i}=s^{n-1}_i$ for $i<m$.  We then consider $t_{n+1}$.

\end{enumerate}

It is easy to see that each sequence $(s^n_i)$ represents the same linear order as $(t_i)_{i=0}^n$.  If this process never terminates, let $s^\omega_i$ be the term taken by the tail of the sequence $(s^n_i)_{n \in \mathbb{N}}$.  If that sequence has no constant tail then let $s^\omega_i=\emptyset$.  By construction, the sequence $(s^\omega_i)$ is in normal form.

If we obtain $(s^\omega_i)$ via Number 2 then $(s^\omega_i)$ represents the same linear order as $(t_i)$.  Suppose that we obtain $(s^\omega_i)$ as a limit.  Suppose that $s^\omega_{j+1} \not= \emptyset$.  Let $m_j$ be the least number such that $s^n_{j+1} \not= \emptyset$ for all $m>n$.  Then $(s^\omega_i)_{i=0}^{j}$ represents the same linear order as $(t_i)_{i=0}^{m_j-1}$.  Therefore $s^\omega_j$ represents the same linear order as $(t_i)_{m_{(j-1)}}^{m_j - 1}$, and if $(s^\omega_i)$ is an infinite sequence then it represents the same linear order as $(t_i)$.

Suppose that $(s^\omega_i)$ terminates, and let $s^\omega_j$ be the final element.  Then eventually we always eventually apply Number 3 to $s^n_j \,^\wedge s^n_{j+1} \,^\wedge t_{n}'$ or Number  4 to $s^n_j \,^\wedge t_{n}'$ for some $n$.  Therefore $s^\omega_j$ is isomorphic to the linear order represented by the appropriate tail of $(t_i)$, and $(s^\omega_i)$ represents the same linear order as $(t_i)$.
\end{proof}

\end{document}